\numberwithin{equation}{section}
\newtheoremstyle{fancy1}{10pt}{10pt}{\itshape}{12pt}{\textsc\bgroup}{.\egroup}{8pt}{
}
\newtheoremstyle{fancy2}{10pt}{10pt}{}{12pt}{\itshape}{.}{8pt}{ }
\theoremstyle{fancy1}
\newtheorem{cor}[equation]{Corollary}
\newtheorem*{cora*}{Corollary A}
\newtheorem*{corb*}{Corollary B}
\newtheorem*{corc*}{Corollary C}
\newtheorem*{cord*}{Corollary D}
\newtheorem*{core*}{Corollary E}
\newtheorem{lem}[equation]{Lemma}
\newtheorem{prop}[equation]{Proposition}
\newtheorem{thm}[equation]{Theorem}
\newtheorem*{main*}{Result}
\newtheorem*{conjecture*}{Conjecture}
\newtheorem*{cor*}{Corollary}
\newcommand{\gal}[2]{\operatorname{Gal}\left( #1 / #2 \right)}
\newtheorem*{def*}{Definition}
\newtheorem{rem}[equation]{Remark}
\newtheorem*{rem*}{Remark}
\newtheorem*{example*}{Example}
\newtheorem*{examples*}{Examples}
\newtheorem*{prop*}{Proposition}
\theoremstyle{remark}
\newtheorem*{case*}{Case}
\newtheorem*{proofA*}{Proof of Corollary A}
\newtheorem*{proofB*}{Proof of Corollary B}
\newtheorem*{proofC*}{Proof of Corollary C}
\newtheorem*{proofD*}{Proof of Corollary D}
\newtheorem*{proofE*}{Proof of Corollary E}
\newtheorem*{proofprop*}{Proof of Proposition}
\newtheorem*{proofthm*}{Proof of Theorem \ref{maintheorem}}
\newcommand{\cref}[1]{Corollary~\ref{#1}}
\newcommand{\F}{{\mathbb{F}}}
\newcommand{\Z}{{\mathbb{Z}}}
\def\con#1=#2(#3){#1 \equiv #2 \bmod{#3}}
\newcommand{\Aut}{\ensuremath{\operatorname{Aut}}}
\newcommand{\GL}{\ensuremath{\operatorname{GL}}}
\begin{document}

\title[Infinite class towers for function fields]{Infinite class towers for function fields}

\author{Jing Long Hoelscher}
\address{University of Illinois at Chicago\\
     Chicago, IL 60607}
\email{jlong@math.uic.edu}

\begin{abstract}
This paper gives examples of function fields $K_0$ over a finite field $\mathbb{F}_q$ of $p$ power order ramified only at one finite regular prime over $\mathbb{F}_q(t)$, which admit infinite Hilbert $p$-class field towers. Such a $K_0$ can be taken as an extension of a cyclotomic function field $\mathbb{F}_q(t)(\lambda_{\mathfrak{p}^m})$ for a certain regular prime $\mathfrak{p}$ in $\mathbb{F}_q[t]$.

\end{abstract}

\maketitle


\section{Introduction}
Let $K=\mathbb{F}_q(t)$ be a function field of one variable over the finite field $\mathbb{F}_q$ of $q$ elements, where $q$ is a power of a prime $p$. Let $A=\mathbb{F}_q[t]$ be the ring of integers in $K$, integral away from $\infty$ in $K$. Let $K_0$ be a finite separable extension of $K$. Denote by $S_{K_0}$ the set of all infinite places in $K_0$. For $i=0,1,2,\dots$, take the maximal abelian unramified $p$-extension $K_{i+1}$ of $K_i$ in which all infinite places split completely. If there are no integers $i\geq 0$ such that $K_i=K_{i+1}$, we say that $K_0$ \emph{admits an infinite Hilbert $(p,S_{K_0})$-class field tower}. For any multiplicative abelian group $A$, denote by $d_p(A)=\dim_{\mathbb{F}_p}(A/A^p)$ the $p$-rank of $A$. For a function field $F$, denote by $\mathcal{O}_F$ the ring of integers of $F$ integral away from all infinite places and denote by $\mathcal{O}^*_F$ the group of units in $\mathcal{O}_F$. Also denote by $Cl_F$ and $\tilde{Cl}_F$ the ideal class groups of $F$ and $\mathcal{O}_F$ respectively. Schoof showed in \cite{Sc} that $K_0$ admits an infinite Hilbert $(p,S_{K_0})$-class field tower if $d_p(\tilde{Cl}_{K_0})$ is big enough, i.e.\ $$d_p(\tilde{Cl}_{K_0})\geq2+2\sqrt{d_p(\mathcal{O}^*_{K_0})+1},$$ or if the number $\rho$ of ramified primes in $K_0/K_{-1}$, an intermediate cyclic extension of degree $p$ of $K_0/K$, is big enough, i.e.\ 
$$\rho\geq 3+d_p(\mathcal{O}_{K_{-1}}^*)+2\sqrt{d_p(\mathcal{O}_{K_0}^*)+1},$$ 
given that the set $S_{K_0}$ is stable under $\gal{K_0}{K_{-1}}$. This paper will give examples of extensions $K_0$ of cyclotomic function fields $K(\lambda_{\mathfrak{p}^m})$ that admit infinite Hilbert $(p,S_{K_0})$-class field towers, where $d_p(\tilde{Cl}_{K(\lambda_{\mathfrak{p}^m})})=0$ and there is only one finite prime ramified in $K_0/K$, whereas $d_p(\mathcal{O}_{K_0}^*)$ is very big.
 
Let $K^{ac}$ be an algebraic closure of $K$. For any $f\in A$ and $u\in K^{ac}$, Carlitz defined in \cite{Ca1} and \cite{Ca2} an action $u^f=f(\varphi+\mu)(u)$, where $\varphi: K^{ac}\rightarrow K^{ac}$ is the Frobenius automorphism $u\rightarrow u^q$ and $\mu: K^{ac}\rightarrow  K^{ac}$ is multiplication by $t$. This action gives $K^{ac}$ an $A$-module structure. The set of $f$-torsion points $\Lambda_f:=\{u\in K^{ac}| u^f=0\}$ is a cyclic $A$-module. Denote by $\lambda_f$ a generator of $\Lambda_f$. The function field $K(\lambda_f)$ obtained by adding all $f$-torsion points is the cyclotomic function field for $f$. An irreducible $\mathfrak{p}\in A$ is defined to be regular if $p\nmid |Cl_{K(\lambda_{\mathfrak{p}})}|$ and irregular otherwise. If $\mathfrak{p}$ is very irregular, thus $d_p(\tilde{Cl}_{K(\lambda_{\mathfrak{p}^m})})$ is big, then Schoof's theorem shows that $K(\lambda_{\mathfrak{p}^m})$ will be very likely to admit an infinite Hilbert class tower. The examples $K_0$ in this paper, which have infinite Hilbert class field towers, will be extensions of cyclotomic function fields $K(\lambda_{\mathfrak{p}^m})$ with certain regular irreducibles $\mathfrak{p}$ and positive integers $m$. The main theorem is:

\begin{thm}\label{maintheorem}
Let $\mathfrak{p}$ be an irreducible in $\F_q[t]$. Suppose there exist a cyclotomic function field $K(\lambda_{\mathfrak{p}^m})=\F_q(t)(\lambda_{\mathfrak{p}^m})$ for some $m\in\mathbb{N}$ and a cyclic unramified Galois extension $H/K(\lambda_{\mathfrak{p}^m})$ of prime degree $h\neq p$, in which all infinite places split completely, which satisfy either of the following two conditions:
\begin{enumerate}
\item[(I)]$p\mid |\tilde{Cl}_H|$, $p\nmid |\tilde{Cl}_{K(\lambda_{\mathfrak{p}^m})}|$  and $f_{p,h}^2-4f_{p,h}\geq \frac{4h\cdot\varphi(\mathfrak{p}^m)}{q-1}$;
\item[(II)]$p\nmid |\tilde{Cl}_H|$ and $h\geq \frac{4p\cdot\varphi(\mathfrak{p}^m)}{q-1}+4$,
\end{enumerate}
where $f_{p,h}$ is the order of $p$ in $(\Z/h\Z)^*$ and $\varphi(\mathfrak{p}^m)=q^{dm}-q^{d(m-1)}$ with $d=\deg(\mathfrak{p})$. Then there is a function field $K_0$ ramified only at $\mathfrak{p}$ and $\infty$ over $\F_q(t)$ that admits an infinite Hilbert $(p,S_{K_0})$-class field tower.
\end{thm}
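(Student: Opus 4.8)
The strategy is to build a tower of extensions
\[
K \;=\; \F_q(t)\;\subset\; K(\lambda_{\mathfrak p^m}) \;\subset\; H \;\subset\; K_0,
\]
arrange that $K_0/K$ is ramified only at $\mathfrak p$ and $\infty$, and then invoke Schoof's criterion (quoted in the introduction) to conclude that $K_0$ admits an infinite Hilbert $(p,S_{K_0})$-class field tower. The essential numerical input is that the cyclotomic function field $K(\lambda_{\mathfrak p^m})$ has controlled genus/unit rank, while one can inflate the $p$-rank of the class group by ascending a suitable unramified prime-degree extension.

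\emph{Step 1: a genus-theory / class-group estimate for the cyclotomic base.} First I would recall the basic arithmetic of $K(\lambda_{\mathfrak p^m})/K$: it is ramified only at $\mathfrak p$ (totally, with ramification index $\varphi(\mathfrak p^m)=q^{dm}-q^{d(m-1)}$) and at $\infty$ (tamely, with index $q-1$), its Galois group is $(\F_q[t]/\mathfrak p^m)^\ast$, and the $S$-unit group $\mathcal O_{K(\lambda_{\mathfrak p^m})}^\ast$ has rank determined by the number of infinite places, which is $\varphi(\mathfrak p^m)/(q-1)$. Thus $d_p(\mathcal O_{K(\lambda_{\mathfrak p^m})}^\ast)$ is at most roughly $\varphi(\mathfrak p^m)/(q-1)$ (plus the contribution of $\mu_q$, i.e.\ a bounded correction from torsion). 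This is where the quantity $\varphi(\mathfrak p^m)/(q-1)$ in the hypotheses enters: it is (up to $O(1)$) the $p$-rank of the unit group $\mathcal O^\ast$ both for $K(\lambda_{\mathfrak p^m})$ and, after multiplying by the degrees $h$ resp.\ $p$, for the fields above it.

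\emph{Step 2: constructing $K_0$ and computing the relevant invariants, then applying Schoof.}
\begin{itemize}
\item[(II)] Here $p\nmid|\tilde{Cl}_H|$, so $H$ alone is useless; instead I would take $K_{-1}/K(\lambda_{\mathfrak p^m})$ a cyclic degree-$p$ extension ramified only at $\mathfrak p$ (and possibly $\infty$), obtainable by Kummer/Artin–Schreier theory inside the ray class field for a power of $\mathfrak p$, and then let $K_0$ be the compositum $H\cdot K_{-1}$ (or a degree-$p$ extension of $H$ analogous to $K_{-1}$). The $h$ primes lying over the single ramified prime $\mathfrak p$ in $H$ all ramify in $K_0/H$ (since $H/K(\lambda_{\mathfrak p^m})$ is unramified while $K_0/H$ is built to be ramified at $\mathfrak p$), so the number $\rho$ of ramified primes in $K_0/K_{-1}$ is $\rho = h$ (the prime of $K_{-1}$ over $\mathfrak p$ splits in the unramified extension $K_0/K_{-1}$ coming from $H$ into $h$ primes — one checks $S_{K_0}$ is $\gal{K_0}{K_{-1}}$-stable since the infinite places split in $H/K(\lambda_{\mathfrak p^m})$). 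Feeding $\rho=h$ and $d_p(\mathcal O^\ast_{K_{-1}}), d_p(\mathcal O^\ast_{K_0}) \approx p\cdot\varphi(\mathfrak p^m)/(q-1)$ into Schoof's second inequality
\[
\rho \;\geq\; 3 + d_p(\mathcal O^\ast_{K_{-1}}) + 2\sqrt{d_p(\mathcal O^\ast_{K_0})+1}
\]
reduces, after absorbing the bounded corrections, exactly to $h \geq \tfrac{4p\,\varphi(\mathfrak p^m)}{q-1} + 4$, which is hypothesis (II).
\item[(I)] Here $p\mid|\tilde{Cl}_H|$, so $H$ already has a degree-$p$ unramified subextension; set $K_0 = H$ (or the compositum of $H$ with the first layer of its Hilbert $p$-class tower). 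Since $p\nmid|\tilde{Cl}_{K(\lambda_{\mathfrak p^m})}|$ and $H/K(\lambda_{\mathfrak p^m})$ is cyclic of degree $h\ne p$, a transfer/Chevalley–genus argument shows $d_p(\tilde{Cl}_H)$ is a multiple of $f_{p,h}$ (the $\gal{H}{K(\lambda_{\mathfrak p^m})}$-module structure forces $\F_p[\zeta_h]$-dimensions, and $[\F_p(\zeta_h):\F_p]=f_{p,h}$), hence $\geq f_{p,h}$. Then Schoof's first inequality
\[
d_p(\tilde{Cl}_{K_0}) \;\geq\; 2 + 2\sqrt{d_p(\mathcal O^\ast_{K_0})+1}
\]
with $d_p(\tilde{Cl}_{K_0})\geq f_{p,h}$ and $d_p(\mathcal O^\ast_{K_0})\approx h\cdot\varphi(\mathfrak p^m)/(q-1)$ becomes, after clearing the square root, $f_{p,h}^2 - 4f_{p,h} \geq \tfrac{4h\,\varphi(\mathfrak p^m)}{q-1}$, which is hypothesis (I). In both cases, $K_0/K$ is ramified only at $\mathfrak p$ and $\infty$ by construction, as required.
\end{itemize}

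\emph{Main obstacle.} The routine parts are the ramification bookkeeping and the algebraic manipulation turning Schoof's radical inequalities into the stated polynomial inequalities. The genuinely delicate point is Step 2's lower bound on the invariant that Schoof needs — namely, that ascending the degree-$h$ unramified extension $H/K(\lambda_{\mathfrak p^m})$ (case I) genuinely produces $p$-rank at least $f_{p,h}$ in $\tilde{Cl}_H$ despite the base having trivial $p$-class group, and symmetrically that in case (II) the prime over $\mathfrak p$ really does contribute $\rho=h$ distinct ramified primes in $K_0/K_{-1}$ with $S_{K_0}$ genuinely $\gal{K_0}{K_{-1}}$-stable. Both require a careful Galois-module analysis (Chevalley's ambiguous class number formula, or Kida/Iwasawa-style genus theory for the $\gal{H}{K(\lambda_{\mathfrak p^m})}$-action on $\tilde{Cl}_H \otimes \F_p$) and a precise control of how infinite places and the prime $\mathfrak p$ decompose in the whole tower — this is where the hypotheses $p\nmid|\tilde{Cl}_{K(\lambda_{\mathfrak p^m})}|$ and "all infinite places split completely in $H$" are consumed. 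I expect this Galois-module/decomposition analysis to occupy the bulk of the proof.
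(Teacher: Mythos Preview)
Your treatment of Case~(I) is essentially the paper's argument: the paper also takes $K_0=H$, and your observation that $\tilde{Cl}_H\otimes\F_p$ is an $\F_p[\Z/h\Z]$-module with no invariants (since $p\nmid|\tilde{Cl}_{K(\lambda_{\mathfrak p^m})}|$), hence a module over $\F_p(\zeta_h)$ of $\F_p$-dimension divisible by $f_{p,h}$, is exactly the content of the paper's Galois-closure computation showing $h\mid|\GL_l(\F_p)|$ and hence $l\ge f_{p,h}$. Both then feed $h_1\ge f_{p,h}$ into \v Safarevi\v c--Golod together with $h_2-h_1\le|S_H|-1$.

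Case~(II), however, has a real gap. With $K_0=H\cdot K_{-1}$ your bookkeeping is inconsistent: $K_0/K_{-1}$ has degree~$h$, not~$p$, so Schoof's second criterion does not apply there; and your estimate $d_p(\mathcal O^\ast_{K_0})\approx p\,\varphi(\mathfrak p^m)/(q-1)$ is off by a factor of~$h$, since $K_0$ has up to $hp\,\varphi(\mathfrak p^m)/(q-1)$ infinite places. If instead you apply Schoof's second criterion to the degree-$p$ extension $K_0/H$ with $\rho=h$, the term $d_p(\mathcal O^\ast_H)\approx h\,\varphi(\mathfrak p^m)/(q-1)$ already exceeds $\rho=h$, so the inequality is vacuous. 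The compositum simply does not produce enough $p$-rank relative to the unit rank.

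What the paper does instead is the key construction you are missing. One first shows (via a ray-class-group argument and Leopoldt for function fields) that there is a degree-$p$ Artin--Schreier extension $H_1=H(y_1)$ of $H$ ramified only at a \emph{single} prime $\mathfrak p_1$ above $\mathfrak p$, with all infinite places split; its $\gal{H}{K(\lambda_{\mathfrak p^m})}$-conjugates $H_i=H(y_i)$ are ramified only at $\mathfrak p_i$. Now set $L=H(y_0)$ with $y_0=y_1+\cdots+y_h$, a single degree-$p$ extension ramified at \emph{all} $\mathfrak p_i$. A direct discriminant comparison in the tower $H\subset L\subset H(y_1,\dots,y_h)$ shows that each $L(y_i)/L$ is \emph{unramified}. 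This gives $h_1(L)\ge h$ outright, and then \v Safarevi\v c--Golod with $h_2-h_1\le|S_L|-1<ph\,\varphi(\mathfrak p^m)/(q-1)$ yields exactly $h<\tfrac{4p\,\varphi(\mathfrak p^m)}{q-1}+4$, contradicting~(II). The Artin--Schreier ``sum'' trick and its discriminant lemma are the heart of Case~(II), and neither appears in your proposal.
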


As an application we will show:
\begin{thm}\label{infiniteclassfield2}
There exist extensions $K_0$ over cyclotomic function fields $K(\lambda_{\mathfrak{p}^m})$, ramified over $\F_q(t)$ only at one regular finite prime $\mathfrak{p}\in\mathbb{F}_q[t]$, which admits an infinite $(p,S_{K_0})$-class field tower.
\end{thm}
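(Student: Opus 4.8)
The plan is to deduce Theorem~\ref{infiniteclassfield2} from \tref{maintheorem} by producing explicit data $(q,\mathfrak{p},m,h,H)$ that meet the hypotheses of that theorem, with $\mathfrak{p}$ regular. First I would fix a prime $p$, a power $q$ of $p$, and a \emph{regular} irreducible $\mathfrak{p}\in\F_q[t]$ --- i.e.\ $p\nmid|Cl_{K(\lambda_{\mathfrak{p}})}|$ --- which is easy to arrange and to verify (for primes of small degree relative to $q$, using the analytic class number formula to evaluate the relevant $L$-values). Fixing also $m$, one has $[K(\lambda_{\mathfrak{p}^m}):K]=\varphi(\mathfrak{p}^m)=q^{d(m-1)}(q^d-1)$, and since $K(\lambda_{\mathfrak{p}^m})/K(\lambda_{\mathfrak{p}})$ is a $p$-extension, regularity of $\mathfrak{p}$ forces $p\nmid|\tilde{Cl}_{K(\lambda_{\mathfrak{p}^m})}|$ as well (the same argument as Kummer's propagation of regularity up the $\mathfrak{p}^\infty$-tower; or one simply verifies this for the chosen $m$). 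It then remains only to exhibit a prime $h\neq p$ and a cyclic unramified extension $H/K(\lambda_{\mathfrak{p}^m})$ of degree $h$ in which all infinite places split completely, satisfying the numerical and class-group requirements of (I) or (II).

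Such an $H$ is precisely a cyclic degree-$h$ subextension of the maximal abelian unramified extension of $K(\lambda_{\mathfrak{p}^m})$ in which all infinite places split completely, an extension whose Galois group is $\tilde{Cl}_{K(\lambda_{\mathfrak{p}^m})}$; so for a prime $h\neq p$ it exists exactly when $h\mid|\tilde{Cl}_{K(\lambda_{\mathfrak{p}^m})}|$. I would therefore compute, or at least partially factor, the order $|\tilde{Cl}_{K(\lambda_{\mathfrak{p}^m})}|$ via the analytic class number formula: up to an explicit factor supported on the infinite places it equals the product $\prod_{\chi\neq 1}L(1,\chi)$ over the Dirichlet characters $\chi$ modulo $\mathfrak{p}^m$, each $L(1,\chi)$ being a product $\prod_i(1-\alpha_{i,\chi})$ of algebraic integers built from the Frobenius eigenvalues, and from this one extracts a prime factor $h$. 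One then checks that $h$ falls in the window demanded by \tref{maintheorem}: for (II) that $h\geq\frac{4p\,\varphi(\mathfrak{p}^m)}{q-1}+4$, and for (I) that $f_{p,h}^2-4f_{p,h}\geq\frac{4h\,\varphi(\mathfrak{p}^m)}{q-1}$ (which also asks that the order $f_{p,h}$ of $p$ modulo $h$ be large, so one prefers a prime factor $h$ of which $p$ is a primitive root). The $q-1$ in the denominator is what makes this realistic: taking $q$ a large enough power of $p$ and $\mathfrak{p}$ of small degree shrinks the right-hand sides, while $|\tilde{Cl}_{K(\lambda_{\mathfrak{p}^m})}|$ is exponentially large in the genus, so a prime factor of the needed size should be available.

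Finally one must dispatch the clause on $\tilde{Cl}_H$. For alternative (I), $p\mid|\tilde{Cl}_H|$ follows from a genus-theory (ambiguous class number) count for the unramified degree-$h$ extension $H/K(\lambda_{\mathfrak{p}^m})$, once the genus of $H$ (which has degree $h\,\varphi(\mathfrak{p}^m)$ over $K$) is large enough. For alternative (II), $p\nmid|\tilde{Cl}_H|$: here $[H:K(\lambda_{\mathfrak{p}^m})]=h$ is prime to $p$, so the $p$-part of $\tilde{Cl}_H$ splits as a $\gal{H}{K(\lambda_{\mathfrak{p}^m})}$-module into the summand inflated from $K(\lambda_{\mathfrak{p}^m})$ (trivial, by the first paragraph) and the remaining eigenspaces, whose vanishing one checks in the chosen example. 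The main obstacle is exactly this package of divisibility bookkeeping: ensuring that $|\tilde{Cl}_{K(\lambda_{\mathfrak{p}^m})}|$ really does have a prime factor $h\neq p$ landing in the narrow range forced by the inequalities of \tref{maintheorem} (with $f_{p,h}$ large enough in case (I)) while simultaneously controlling $d_p(\tilde{Cl}_H)$ --- this is not a formal consequence of size estimates. I expect it to be settled by an explicit computation in a carefully chosen case --- small $p$, $q$ a not-too-small power of $p$, $\mathfrak{p}$ of small degree, $m$ small --- leaning on known formulas and tables for class numbers of cyclotomic function fields, and that concrete verification is the real content behind the existence assertion of Theorem~\ref{infiniteclassfield2}.
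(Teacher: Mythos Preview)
Your overall strategy --- deduce Theorem~\ref{infiniteclassfield2} from Theorem~\ref{maintheorem} by exhibiting explicit data $(q,\mathfrak{p},m,h)$ with $\mathfrak{p}$ regular, locating the prime $h$ as a factor of $|\tilde{Cl}_{K(\lambda_{\mathfrak{p}^m})}|$ via known class-number tables --- is exactly what the paper does (Corollaries~\ref{q=3} and~\ref{d=1}, using the tables of \cite{IS} and \cite{GS}; incidentally with $q=p$ prime throughout, not a higher power).

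The genuine gap is your final paragraph, where you propose to ``dispatch the clause on $\tilde{Cl}_H$'' by actually determining whether $p$ divides $|\tilde{Cl}_H|$. Neither of your suggestions works. A genus-theory/ambiguous-class count produces no $p$-divisibility for an \emph{unramified} cyclic extension of degree $h$ prime to $p$ over a base with trivial $p$-class group --- there is no ramification contribution, and the invariant part is already $p$-trivial. And a direct verification of $p\nmid|\tilde{Cl}_H|$ is computationally hopeless: in the paper's first example $[H:\F_q(t)]=h\cdot\varphi(\mathfrak{p}^m)$ with $h\approx 6.4\times10^{13}$.

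The key observation you are missing is that one never needs to know which side of the dichotomy $\tilde{Cl}_H$ falls on. The paper simply chooses $h$ so that \emph{both} numerical inequalities hold simultaneously: $f_{p,h}^2-4f_{p,h}\geq\frac{4h\,\varphi(\mathfrak{p}^m)}{q-1}$ \emph{and} $h\geq\frac{4p\,\varphi(\mathfrak{p}^m)}{q-1}+4$. Regularity of $\mathfrak{p}$ already gives $p\nmid|\tilde{Cl}_{K(\lambda_{\mathfrak{p}^m})}|$, so if $p\mid|\tilde{Cl}_H|$ then condition~(I) applies, and if $p\nmid|\tilde{Cl}_H|$ then condition~(II) applies; either way Theorem~\ref{maintheorem} fires. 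The tables in Section~3 verify both inequalities for each listed example, and that is the entire proof.
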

\bigskip

\section{Infinite class field towers}

Let $K(\lambda_{\mathfrak{p}^m})=\F_q(t)(\lambda_{\mathfrak{p}^m})$, and let $H$ be a cyclic unramified Galois extension over $K(\lambda_{\mathfrak{p}^m})$ of prime degree $h$, where all infinite places split completely. Denote by $\mathcal{O}_{K(\lambda_{\mathfrak{p}^m})}$ and $\mathcal{O}_H$ the rings of integers of $K(\lambda_{\mathfrak{p}^m})$ and $H$, integral away from the infinite places. Also denote by $\tilde{Cl}_{K(\lambda_{\mathfrak{p}^m})}$ and $\tilde{Cl}_H$ the ideal class groups of $\mathcal{O}_{K(\lambda_{\mathfrak{p}^m})}$ and $\mathcal{O}_H$.

For any profinite $p$-group $G$, let $h_i=d_p( H^i(G,\Z/p\Z))$ for all $i>0$. Recall \v{S}afarevi\v{c}-Golod's theorem (Theorem 2.1 of \cite{Sc}), which says that if $G$ is a non-trivial finite $p$-group, then $h_2>\frac{h_1^2}{4}$. We will use the \v{S}afarevi\v{c}-Golod theorem for the proofs of Propositions $\ref{pdividclassnumber}$ and $\ref{pnotdividclassnumber}$. 

\begin{prop}\label{pdividclassnumber}
Under condition (I) in Theorem \ref{maintheorem}, the function field $H$ has an infinite Hilbert $(p,S_H)$-class field tower, where $S_H$ is the set of infinite places in $H$.
\end{prop}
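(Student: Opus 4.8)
The plan is to apply the Šafarevič–Golod theorem to the Galois group $G=\gal{\mathcal{H}}{H}$, where $\mathcal{H}$ is the maximal unramified $p$-extension of $H$ in which all infinite places of $H$ split completely. If $G$ is infinite we are done; so assume for contradiction that $G$ is a nontrivial finite $p$-group (nontriviality comes from $p\mid|\tilde{Cl}_H|$), and derive a contradiction from the inequality $h_2>\tfrac14 h_1^2$. Here $h_1=d_p(H^1(G,\Z/p\Z))=d_p(G^{ab})$ and, since the extension is unramified with infinite places split, $G^{ab}$ is a quotient of $\tilde{Cl}_H$, so $h_1\le d_p(\tilde{Cl}_H)$; conversely class field theory identifies $G^{ab}$ with the relevant ray-class-type group and gives $h_1=d_p(\tilde{Cl}_H)$. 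The crux is therefore to bound $h_2$ from above and $h_1$ from below in terms of the arithmetic data $h$, $\varphi(\mathfrak{p}^m)$, $q$, and $f_{p,h}$.

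First I would bound $h_1$ from below. We have the cyclic degree-$h$ unramified extension $H/K(\lambda_{\mathfrak{p}^m})$ with $p\nmid|\tilde{Cl}_{K(\lambda_{\mathfrak{p}^m})}|$ but $p\mid|\tilde{Cl}_H|$. Let $\sigma$ generate $\gal{H}{K(\lambda_{\mathfrak{p}^m})}$. The $\F_p[\langle\sigma\rangle]$-module $\tilde{Cl}_H/p$ has trivial $\sigma$-coinvariants (that quotient injects into $\tilde{Cl}_{K(\lambda_{\mathfrak{p}^m})}/p=0$ up to the norm map, using that $H/K(\lambda_{\mathfrak{p}^m})$ is unramified so the norm is surjective on class groups), hence every indecomposable $\F_p[\Z/h\Z]$-summand appearing is nontrivial and so has dimension divisible by $f_{p,h}$ (the nontrivial irreducible $\F_p[\Z/h\Z]$-modules all have $\F_p$-dimension $f_{p,h}$, as $h\ne p$). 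Therefore $f_{p,h}\mid h_1$ and, since $\tilde{Cl}_H/p\ne0$, we get $h_1\ge f_{p,h}$.

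Next I would bound $h_2$ from above. By Golod–Šafarevič's cohomological bound (as used in \cite{Sc}), for the maximal unramified split-at-infinity $p$-extension one has $h_2\le h_1 + d_p(\mathcal{O}_H^*)$, or more precisely $h_2 - h_1 \le d_p(\mathcal{O}_H^*/(\mathcal{O}_H^*)^p\cdot(\text{local conditions}))$; the key point is that $d_p(\mathcal{O}_H^*)\le$ the rank of the unit group, which by Dirichlet's unit theorem for function fields is $|S_H|-1$. Since all infinite places of $K(\lambda_{\mathfrak{p}^m})$ split completely in $H$, $|S_H| = h\cdot|S_{K(\lambda_{\mathfrak{p}^m})}|$, and the number of infinite places of $K(\lambda_{\mathfrak{p}^m})$ is $\varphi(\mathfrak{p}^m)/(q-1)$ (the place $\infty$ of $K$ splits into $[K(\lambda_{\mathfrak{p}^m}):K]/(q-1)=\varphi(\mathfrak{p}^m)/(q-1)$ places). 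Hence $d_p(\mathcal{O}_H^*)\le h\varphi(\mathfrak{p}^m)/(q-1)-1$, giving $h_2\le h_1 + h\varphi(\mathfrak{p}^m)/(q-1)$. Feeding both bounds into $\tfrac14 h_1^2 < h_2 \le h_1 + h\varphi(\mathfrak{p}^m)/(q-1)$ yields $h_1^2 - 4h_1 < 4h\varphi(\mathfrak{p}^m)/(q-1)$; but $h_1\ge f_{p,h}$ and the function $x\mapsto x^2-4x$ is increasing for $x\ge2$, so (after checking the small case $f_{p,h}=1$ separately, where the hypothesis forces $4h\varphi(\mathfrak{p}^m)/(q-1)\le -3$, impossible) we obtain $f_{p,h}^2 - 4f_{p,h} < 4h\varphi(\mathfrak{p}^m)/(q-1)$, contradicting hypothesis (I). Hence $G$ is infinite.

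The main obstacle I anticipate is pinning down the correct upper bound on $h_2$: one must be careful that the relevant cohomology group for the \emph{split-at-infinity} tower is controlled by $d_p(\mathcal{O}_H^*)$ (the $S_H$-units) rather than by the full unit group or by some larger selmer-type group, and that no extra local terms at the infinite places survive — this is exactly where the hypothesis "all infinite places split completely" is used, together with the reciprocity-law computation of $h_1$. Getting the exact constant (the "$4$" and the absence of a "$+1$" inside the square root, in contrast to Schoof's stated inequality) will require quoting the precise form of the Golod–Šafarevič estimate for $(p,S)$-towers from \cite{Sc} rather than the cruder version in the introduction.
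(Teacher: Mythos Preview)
Your proof is correct and follows the same overall architecture as the paper: assume the tower is finite, invoke \v{S}afarevi\v{c}--Golod ($h_2>h_1^2/4$), bound $h_2-h_1\le |S_H|-1$ via Schoof, and combine with a lower bound $h_1\ge f_{p,h}$ to contradict hypothesis~(I). The only substantive difference is in how the inequality $h_1\ge f_{p,h}$ is obtained.

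The paper argues as follows: pick a single unramified $\Z/p\Z$-extension $M_0/H$ (which exists since $p\mid|\tilde{Cl}_H|$), form its Galois closure $\bar M_0$ over $K(\lambda_{\mathfrak p^m})$, so that $\gal{\bar M_0}{K(\lambda_{\mathfrak p^m})}\cong(\Z/p\Z)^l\rtimes\Z/h\Z$ with the $\Z/h\Z$-action nontrivial (else $M_0$ would descend, contradicting $p\nmid|\tilde{Cl}_{K(\lambda_{\mathfrak p^m})}|$). Then $h$ divides $|\GL_l(\F_p)|$, forcing $h\mid p^i-1$ for some $i\le l$, whence $f_{p,h}\le l\le h_1$.

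Your route is instead purely module-theoretic: view $\tilde{Cl}_H/p$ as a semisimple $\F_p[\Z/h\Z]$-module, show its trivial isotypic component vanishes (via the $i$--$N$ maps and $p\nmid|\tilde{Cl}_{K(\lambda_{\mathfrak p^m})}|$), and conclude that every irreducible summand has $\F_p$-dimension $f_{p,h}$, so $h_1=d_p(\tilde{Cl}_H)\ge f_{p,h}$. This is a cleaner and more direct way to reach the same bound, and it makes the role of $f_{p,h}$ (as the dimension of the nontrivial irreducibles of $\F_p[\Z/h\Z]$, $h$ prime) more transparent. The paper's Galois-closure argument is really encoding the same representation theory through the order of $\GL_l(\F_p)$; your version avoids that detour. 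Note also that your separate treatment of $f_{p,h}=1$ is unnecessary: hypothesis~(I) already forces $f_{p,h}^2-4f_{p,h}>0$, hence $f_{p,h}\ge5$, as the paper observes.
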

\begin{proof}
Since $p||\tilde{Cl}_H|$, we can pick a degree $p$ cyclic unramified extension $M_0/H$ in which all infinite places split completely. Condition (I) says $p\nmid |\tilde{Cl}_{K(\lambda_{\mathfrak{p}^m})}|$. So $(p,h)=1$. Take the Galois closure $\bar{M}_0$ of $M_0/K(\lambda_{\mathfrak{p}^m})$, which is the composite of all conjugates of $M_0$ over $K(\lambda_{\mathfrak{p}^m})$. The Galois group $\gal{\bar{M}_0}{H}$ is of the form $(\Z/p\Z)^l$ for some $l\in\mathbb{N}$, and $\gal{\bar{M}_0}{K(\lambda_{\mathfrak{p}^m})}\cong (\Z/p\Z)^l\rtimes\Z/h\Z$. The semi-direct product gives a non-trivial homomorphism $\Z/h\Z\rightarrow \GL_l(\mathbb{F}_p)\cong \Aut((\Z/p\Z)^l)$, so $h\,|\,\hspace{0.1em}|\GL_l(\mathbb{F}_p)|=(p^l-1)(p^l-p)\cdots(p^l-p^{l-1})$. Then $h\,|\,(p^i-1)$, for some $i\leq l$, i.e.\, $p^i=1$ mod $h$, and the order $f_{(p,h)}$ of $p$ in $(\Z/h\Z)^*$ divides $i$. Thus $f_{p,h}\leq i\leq l$. Also there are $l$ linearly disjoint cyclic degree $p$ unramified extensions of $H$, so $h_1=d_p(H^1(G,\Z/p\Z))\geq l$, where $G=\gal{\Omega_H}{H}$ is the Galois group of the maximal unramified $p$-extension $\Omega_H$ of $H$ where all infinite places split completely. Therefore $f_{p,h}\leq i\leq l \leq h_1$. Since $f_{p,h}>4$ by condition (I) and the function $x^2-4x$ is increasing on $x\geq 2$, we have $f_{p,h}^2-4f_{p,h}\leq h_1^2-4h_1$. If $\Omega_H$ were finite, the \v{S}afarevi\v{c}-Golod theorem would imply that $h_1^2-4h_1<4h_2-4h_1$. Also by the proof in Theorem 2.3 in \cite{Sc} we would have $h_2-h_1\leq |S_H|-1$. Thus $$f_{p,h}^2-4f_{p,h}\leq h_1^2-4h_1<4h_2-4h_1< 4|S_H|=\frac{4h\cdot\varphi(\mathfrak{p}^m)}{q-1},$$ in contradiction to condition (I).  
\end{proof}

{From} now on we will focus on the case under condition (II) in Theorem \ref{maintheorem}. We know the principal prime ideal $(\lambda_{\mathfrak{p}^m})$ in $K(\lambda_{\mathfrak{p}^m})$ above $\mathfrak{p}$ splits completely as $(\lambda_{\mathfrak{p}^m})=\prod_{i=1}^{h} \mathfrak{p}_i $ in the subfield $H$ of the Hilbert class field of $K(\lambda_{\mathfrak{p}^m})$. 
\begin{prop}\label{rayclass}
Assume $p\nmid |Cl_H|$, then for any $\mathfrak{p}_i$ in $H$ above $\mathfrak{p}$ there exist integers $k$ such that $p| |\tilde{Cl}_H^{\mathfrak{p}_i^k}|$, where $\tilde{Cl}_H^{\mathfrak{p}_i^k}$ is the ray class group of $\mathcal{O}_H$ for the modulus $\mathfrak{p}_i^k$.
\end{prop}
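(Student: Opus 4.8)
The plan is to relate the ray class group $\tilde{Cl}_H^{\mathfrak{p}_i^k}$ to $\tilde{Cl}_H$ by the standard exact sequence for a Dedekind domain and then force $p$-torsion into it. Write $\mathcal{O}=\mathcal{O}_H$ and $d=\deg\mathfrak{p}_i$. For every $k\ge1$ there is an exact sequence
$$\mathcal{O}^*\ \xrightarrow{\ \pi\ }\ (\mathcal{O}/\mathfrak{p}_i^k)^*\ \longrightarrow\ \tilde{Cl}_H^{\mathfrak{p}_i^k}\ \longrightarrow\ \tilde{Cl}_H\ \longrightarrow\ 0,$$
where $\pi$ is reduction modulo $\mathfrak{p}_i^k$; hence $|\tilde{Cl}_H^{\mathfrak{p}_i^k}|=|\tilde{Cl}_H|\cdot|(\mathcal{O}/\mathfrak{p}_i^k)^*|\big/|\pi(\mathcal{O}^*)|$. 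All infinite places of $H$ have degree one (they lie over $\infty$ and split completely over $K(\lambda_{\mathfrak{p}^m})$, whose infinite places have degree one), so $\tilde{Cl}_H$ is a quotient of $Cl_H$ and the hypothesis $p\nmid|Cl_H|$ gives $p\nmid|\tilde{Cl}_H|$. Thus it suffices to produce one $k$ with $p\mid|(\mathcal{O}/\mathfrak{p}_i^k)^*|\big/|\pi(\mathcal{O}^*)|$. Since $(\mathcal{O}/\mathfrak{p}_i^k)^*\cong(\mathcal{O}/\mathfrak{p}_i)^*\times P_k$ with $P_k:=1+\mathfrak{p}_i/\mathfrak{p}_i^k$ the Sylow $p$-subgroup (of order $q^{d(k-1)}$), and $\pi(\mathcal{O}^*)$ is a subgroup, this in turn follows once $P_k\not\subseteq\pi(\mathcal{O}^*)$, which is guaranteed as soon as $d_p(P_k)>d_p\big(\pi(\mathcal{O}^*)\big)$.

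It remains to bound the two $p$-ranks. On one side, the function field unit theorem gives $\mathcal{O}^*\cong\mu_H\times\mathbb{Z}^{s-1}$, with $\mu_H$ the constant field units (order prime to $p$) and $s=|S_H|$, so $\pi(\mathcal{O}^*)$, being a quotient of $\mathcal{O}^*$, satisfies $d_p\big(\pi(\mathcal{O}^*)\big)\le s-1$, a bound independent of $k$. On the other side $P_k=1+\mathfrak{p}_i/\mathfrak{p}_i^k$ is a $p$-group whose $p$-rank grows with $k$: the characteristic-$p$ identity $(1+a)^p=1+a^p$, together with the perfectness of the residue field $\mathcal{O}/\mathfrak{p}_i\cong\F_{p^{rd}}$ (where $q=p^r$), shows via a filtration on the graded pieces $\mathfrak{p}_i^n/\mathfrak{p}_i^{n+1}$ that $P_k^p$ hits exactly the graded pieces at levels divisible by $p$, whence
$$d_p(P_k)=rd\cdot\#\{\,n:1\le n\le k-1,\ p\nmid n\,\}\ \ge\ rd\,(k-1)\tfrac{p-1}{p}\ \xrightarrow[k\to\infty]{}\ \infty.$$

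Choosing any $k$ with $rd\,(k-1)\tfrac{p-1}{p}>s-1$ then gives $d_p(P_k)>s-1\ge d_p\big(\pi(\mathcal{O}^*)\big)$, so $P_k\not\subseteq\pi(\mathcal{O}^*)$, so $p$ divides $|(\mathcal{O}/\mathfrak{p}_i^k)^*/\pi(\mathcal{O}^*)|$, and therefore $p\mid|\tilde{Cl}_H^{\mathfrak{p}_i^k}|$, as required. The routine inputs are the exact sequence and the unit theorem; the load-bearing point — and the only thing one must handle carefully — is the growth of $d_p\big(1+\mathfrak{p}_i/\mathfrak{p}_i^k\big)$, where the characteristic-$p$ identity $(1+a)^p=1+a^p$ is doing all the work: it makes this $p$-rank grow linearly in $k$, while the image of the fixed, finitely generated unit group $\mathcal{O}^*$ can only contribute $p$-rank at most $s-1$. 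One further bookkeeping check is the passage from $\tilde{Cl}_H$ to $Cl_H$, i.e.\ the existence of a degree-one infinite place of $H$; if one prefers, the proposition can simply be phrased with $\tilde{Cl}_H$ in place of $Cl_H$, and this check disappears.
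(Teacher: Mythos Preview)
Your argument is correct and rests on the same exact sequence and the same rank comparison as the paper's proof: the global $S_H$-units contribute $p$-rank at most $|S_H|-1$, while the local principal units at $\mathfrak p_i$ contribute $p$-rank tending to infinity with $k$. The difference is in execution. The paper passes to the projective limit over $k$, identifies $\varprojlim U_{\mathfrak p_i}/U_{\mathfrak p_i}^{(k)}\cong \Z/(q-1)\Z\oplus\Z_p^{\mathbb N}$ via Neukirch, and then invokes Kisilevsky's function-field Leopoldt theorem on the unit side. You instead stay at a fixed finite level and compute $d_p\big(1+\mathfrak p_i/\mathfrak p_i^k\big)$ directly from the characteristic-$p$ identity $(1+a)^p=1+a^p$, which is more elementary and self-contained. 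A small bonus of your route is that it makes visible that the Leopoldt input in the paper's proof is not actually load-bearing: only the \emph{upper} bound on the $\Z_p$-rank of the closure of the global units (equivalently your $d_p(\pi(\mathcal O^*))\le s-1$) is needed, and that follows already from Dirichlet's unit theorem. One point to keep tidy in your write-up is the implication ``$d_p(P_k)>d_p(\pi(\mathcal O^*))\Rightarrow P_k\not\subseteq\pi(\mathcal O^*)$'': this uses that $P_k$ is the full Sylow $p$-subgroup of $(\mathcal O/\mathfrak p_i^k)^*$, so that any subgroup containing $P_k$ has $p$-rank exactly $d_p(P_k)$; you state this, but it is worth flagging since $d_p$ of a subgroup need not be bounded by $d_p$ of the ambient group in general.
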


\begin{proof}For any integer $k\in \mathbb{N}$, we have the following exact sequence from class field theory (Proposition 1.1 in \cite{Au})
$$ \mathcal{O}_{S_H}^* \longrightarrow U_{\mathfrak{p}_i}/U_{\mathfrak{p}_i}^{(k)}\longrightarrow \tilde{Cl}_H^{\mathfrak{p}_i^k}\longrightarrow \tilde{Cl}_H\longrightarrow 1,   \hspace{2em} (*)$$
where $\mathcal{O}_{S_H}^*$ is the group of units outside $S_H$, the set of all infinite places in $H$. Consider the $\mathfrak{p}$-adic completion $\hat{K}_{\mathfrak{p}}$ of $K=\F_q(t)$ at $\mathfrak{p}$ and the $\mathfrak{p}_i$-adic completion $\hat{H}_{\mathfrak{p}_i}\cong \hat{K}_{\mathfrak{p}}(\lambda_{\mathfrak{p}^m})$ of $H$ at $\mathfrak{p}_i$. Denote by $\mathcal{O}_{\hat{H}_{\mathfrak{p}_i}}$ and $\hat{\mathfrak{p}}_i$ the valuation ring in $\hat{H}_{\mathfrak{p}_i}$ and its maximal ideal. Denote by $U_{\mathfrak{p}_i}=\{u\in \hat{H}_{\mathfrak{p}_i}| v_{\mathfrak{p}_i}(u)=0\}$ the unit group in $\mathcal{O}_{\hat{H}_{\mathfrak{p}_i}}$ and denote by $U_{\mathfrak{p}_i}^{(k)}=1+\hat{\mathfrak{p}}_i^k$ the $k$th one-unit group. The kernel of the first map is $\mathcal{O}_{S_H}^{\mathfrak{p}_i^k}=\{u\in \mathcal{O}_{S_H}^*| u\equiv 1$ mod $\mathfrak{p}_i^k \}$. Now take the projective limit of the exact sequence (*), 
$$1\rightarrow \lim_{\leftarrow}\mathcal{O}_{S_H}^*/\mathcal{O}_{S_H}^{\mathfrak{p}_i^k}\rightarrow \lim_{\leftarrow}U_{\mathfrak{p}_i}/U_{\mathfrak{p}_i}^{(k)}\rightarrow \lim_{\leftarrow} \tilde{Cl}^{\mathfrak{p}_i^k}_H\rightarrow \tilde{Cl}_H\rightarrow 1.  \hspace{2em} (**)$$
Now $\lim_{\leftarrow}U_{\mathfrak{p}_i}/U_{\mathfrak{p}_i}^{(k)}\cong U_{\mathfrak{p}_i}$. Proposition II.5.7 in \cite{Ne} says $U_{\mathfrak{p}_i}\cong \Z/(q-1)\Z\oplus \Z_p^{\mathbb{N}}.$ On the other hand, $\lim_{\leftarrow} \mathcal{O}_{S_H}^*/\mathcal{O}_{S_H}^{\mathfrak{p}_i^k}\cong\lim_{\leftarrow}\mathcal{O}_{S_H}^*/\mathcal{O}_{S_H}^{\mathfrak{p}_i}\times\lim_{\leftarrow}\mathcal{O}_{S_H}^{\mathfrak{p}_i}/\mathcal{O}_{S_H}^{\mathfrak{p}_i^k}$. We have a nature inclusion $i:\mathcal{O}_{S_H}^*/\mathcal{O}_{S_H}^{\mathfrak{p}_i}\hookrightarrow(\mathcal{O}_H/\mathfrak{p}_i)^*\cong \F_q^*$. As for $\lim_{\leftarrow}\mathcal{O}_{S_H}^{\mathfrak{p}_i}/\mathcal{O}_{S_H}^{\mathfrak{p}_i^k}$, it is the pro-$p$-completion of $\mathcal{O}_{S_H}^{\mathfrak{p}_i}$. The $\Z_p$-rank of $\lim_{\leftarrow}\mathcal{O}_{S_H}^{\mathfrak{p}_i}/\mathcal{O}_{S_H}^{\mathfrak{p}_i^k}$ is at most the number $|S_H|-1$ of generators of $\mathcal{O}_{S_H}^*$, which is $\frac{h\cdot\varphi(\mathfrak{p}^m)}{q-1}$. Also a strong form of Leopoldt's conjecture is proved by Kisilevsky in \cite{Ki} for function fields, which says the $|S_H|-1$ generators of $\mathcal{O}^*_{S_H}$ are also multiplicatively independent over $\Z_p$ in the completion. So we have $p| \frac{|\tilde{Cl}_H^{\mathfrak{p}_i^k}|}{|\tilde{Cl}_H|}$, thus $p||\tilde{Cl}_H^{\mathfrak{p}_i^k}|$ for sufficiently large $k$.
\end{proof}

By Proposition \ref{rayclass} above, we know $p\mid |\tilde{Cl}_H^{\mathfrak{p}_1^k}|$ for some integer $k$. Since $p$ does not divide the class number $|\tilde{Cl}_H|$, there exists a degree $p$ extension $H_1$ of $H$ ramified only at $\mathfrak{p}_1$, where all infinite places split completely. So $H_1/H$ is an Artin-Schreier extension and $H_1$ is of the form $H(y_1)$ where $y_1^p-y_1=x_1$ with $x_1\in H$. Since $H_1/H$ is only ramified at $\mathfrak{p}_1$, we may assume that $v_{\mathfrak{p}_1}(x_1)=-m<0$ and $v_{\mathfrak{q}}(x_1)\geq 0$ for $\mathfrak{q}\neq\mathfrak{p}_1$. The conjugates $H_i$ of $H_1$ over $K(\lambda_{\mathfrak{p}^m})$ are degree $p$ extensions of $H$ ramified only at $\mathfrak{p}_i$, and they are of the form $H_i=H(y_i)$ with $y_i^p-y_i=x_i$, where $x_i\in H$ and $v_{\mathfrak{p}_i}(x_i)=-m<0$ and $v_{\mathfrak{q}}(x_i)\geq 0$ for $\mathfrak{q}\neq \mathfrak{p}_i$. Consider the extension $L=H(y_0)$, where $y_0^p-y_0=x_1+x_2+\cdots +x_h$. So $y_0=y_1+y_2+\cdots+y_h$. 
\begin{lem}\label{unramified}
The extension $L(y_i)/L$ is unramified for $1\leq i\leq h$.
\end{lem}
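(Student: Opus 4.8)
The plan is to analyze the ramification of $L(y_i)/L$ at each place of $L$ using the Artin–Schreier description, and to show that the apparent pole of $x_i$ is cancelled in $L$ because $y_0 \in L$ already absorbs it. Recall $L = H(y_0)$ with $y_0^p - y_0 = x_1 + \cdots + x_h$, and each $H_i = H(y_i)$ with $y_i^p - y_i = x_i$, where $v_{\mathfrak{p}_i}(x_i) = -m$ and $v_{\mathfrak{q}}(x_i) \geq 0$ for $\mathfrak{q} \neq \mathfrak{p}_i$. Fix $i$ with $1 \leq i \leq h$ and consider $L(y_i)/L$. For a place $\mathfrak{P}$ of $L$, the extension $L(y_i)/L$ is either trivial or a degree-$p$ Artin–Schreier extension, and by standard Artin–Schreier theory it is unramified at $\mathfrak{P}$ precisely when $x_i$ is congruent modulo $\wp(L) = \{z^p - z : z \in L\}$ to an element of the local ring $\mathcal{O}_\mathfrak{P}$, i.e.\ when one can clear any pole of $x_i$ at $\mathfrak{P}$ by subtracting $z^p - z$ for suitable $z \in L$.

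First I would dispose of all places $\mathfrak{P}$ of $L$ not lying over some $\mathfrak{p}_j$: there $v_{\mathfrak{P}}(x_i) \geq 0$, so $x_i$ is already local at $\mathfrak{P}$ and $L(y_i)/L$ is unramified there. It remains to treat the places $\mathfrak{P}$ of $L$ lying above $\mathfrak{p}_j$ for $j = 1, \dots, h$. Here the key trick is the identity $x_i = (y_0^p - y_0) - \sum_{j \neq i} x_j = (y_0^p - y_0) - \sum_{j\neq i} (y_j^p - y_j)$. Working modulo $\wp(L)$, and using that $y_0 \in L$ so $y_0^p - y_0 \in \wp(L)$, we get $x_i \equiv -\sum_{j \neq i} x_j \pmod{\wp(L)}$. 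Now at a place $\mathfrak{P}$ of $L$ above $\mathfrak{p}_i$, each $x_j$ with $j \neq i$ satisfies $v_{\mathfrak{P}}(x_j) \geq 0$, so $-\sum_{j\neq i} x_j$ is local at $\mathfrak{P}$; hence $L(y_i)/L$ is unramified at $\mathfrak{P}$. At a place $\mathfrak{P}$ of $L$ above $\mathfrak{p}_j$ for some $j \neq i$, we instead use $x_i$ directly: $v_{\mathfrak{P}}(x_i) \geq 0$ since $v_{\mathfrak{p}_k}(x_i) \geq 0$ for all $k \neq i$, so again $x_i$ is local at $\mathfrak{P}$ and the extension is unramified. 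This exhausts all places of $L$, so $L(y_i)/L$ is unramified everywhere.

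The step I expect to require the most care is the passage "modulo $\wp(L)$" at a place $\mathfrak{P}$ of $L$ above $\mathfrak{p}_i$: one must check that subtracting $y_0^p - y_0$ and $\sum_{j\neq i}(y_j^p - y_j)$ is legitimate at the \emph{local} level, i.e.\ that $y_0$ and the $y_j$ (for $j \neq i$) lie in the localization $\mathcal{O}_\mathfrak{P}$, not merely in $L$. This holds because $y_j = y_j$ with $y_j^p - y_j = x_j$ and $v_{\mathfrak{P}}(x_j) \geq 0$ for $j \neq i$ forces $v_{\mathfrak{P}}(y_j) \geq 0$ (a one-unit-pole in an Artin–Schreier generator would force a pole of order divisible by... more simply: if $v_\mathfrak{P}(y_j) < 0$ then $v_\mathfrak{P}(y_j^p - y_j) = p\, v_\mathfrak{P}(y_j) < 0$, contradicting $v_\mathfrak{P}(x_j) \geq 0$); similarly $v_\mathfrak{P}(y_0) \geq 0$ at any place of $L$ above $\mathfrak{p}_i$ for $i\neq$ the relevant index — but at $\mathfrak{P} \mid \mathfrak{p}_i$ one cannot say $v_\mathfrak{P}(y_0) \geq 0$, which is exactly why we rewrite $y_0^p - y_0 = x_1 + \cdots + x_h$ and note this element only has a pole contribution from $x_i$ there, so $y_0^p - y_0 + x_i^{-}$... the cleanest formulation is: $x_i + \sum_{j\neq i} x_j = y_0^p - y_0$ shows $x_i \equiv y_0^p - y_0 \pmod{\text{(local part)}}$ at $\mathfrak{P} \mid \mathfrak{p}_i$, and since $y_0^p - y_0 = \wp(y_0)$ with $y_0 \in L$, the Artin–Schreier class of $x_i$ in $L$ is locally trivial at $\mathfrak{P}$. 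I would therefore present the argument through this single clean identity rather than through local manipulations of the individual $y_j$'s, which makes the unramifiedness at $\mathfrak{P} \mid \mathfrak{p}_i$ immediate and handles the only delicate case uniformly.
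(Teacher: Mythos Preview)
Your argument is correct, though the exposition wanders: the paragraph about $v_{\mathfrak{P}}(y_j)\geq 0$ for $j\neq i$ is irrelevant (those $y_j$ do not lie in $L$), and you rightly abandon it for the clean identity $x_i=(y_0^p-y_0)-\sum_{j\neq i}x_j$ with $y_0\in L$, which immediately shows that the Artin--Schreier class of $x_i$ in $L$ is represented by a $\mathfrak{P}$-integral element at every place $\mathfrak{P}\mid\mathfrak{p}_i$. Combined with the trivial observation that $x_i$ is already integral at all other places of $L$, this proves the lemma.

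Your route is genuinely different from the paper's. The paper sets $M=H(y_1,\dots,y_h)$, computes the discriminants $\delta_{H_i/H}=(\mathfrak{p}_i)^{(p-1)(m+1)}$, $\delta_{M/H}=\prod_i(\mathfrak{p}_i)^{p^{h-1}(p-1)(m+1)}$, and $\delta_{L/H}=\prod_i(\mathfrak{p}_i)^{(p-1)(m+1)}$, and then uses the tower relation $\delta_{M/H}=\delta_{L/H}^{p^{h-1}}\cdot N_{L/H}(\delta_{M/L})$ to conclude $\delta_{M/L}=1$. Your local Artin--Schreier argument is more direct and conceptual: it pinpoints \emph{why} the ramification disappears (the single pole of $x_i$ is already accounted for by $y_0\in L$) rather than verifying it by a global numerical coincidence. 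The paper's approach has the minor advantage of giving explicit discriminants along the way, but nothing in the paper uses them afterwards, so your method is arguably the cleaner one here. If you write it up, drop the detour through the $y_j$'s and go straight to the identity.
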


\begin{proof}Let $M=H(y_1,\cdots,y_h)$. We will show $M/L$ is unramified. First we consider the discriminant of $H_i/H$, which is only ramified at $\mathfrak{p}_i$. Since $H_i=H(y_i)$ and $y_i^p-y_i=x_i$ with $v_{\mathfrak{p}_1}(x_1)=-m$, the discriminant of $H_i/H$ $\delta_{H_i/H}=(\mathfrak{p}_i)^{(p-1)(m+1)}$. Now consider the discriminant of the compositum $M/H$. For each $1\leq i \le h$, $H_i/H$ is ramified only at $\mathfrak{p}_i$, so these extensions are linearly disjoint. The extension $M/H$, being the compositum of these extensions, has discriminant $\delta_{M/H}=\prod_{i=1}^h (\delta_{H_i/H})^{p^{h-1}}=\prod_{i=1}^h (\mathfrak{p}_i)^{p^{h-1}\cdot (p-1)(m+1)}$. Next we consider the discriminant of $L/H$. We have $L=H(y_0)$ where $y_0^p-y_0=x_0$, the valuation $v_{\mathfrak{p}_i}(x_0)=v_{\mathfrak{p}_i}(x_1+x_2+\cdots+x_h)=-m$ and $v_{\mathfrak{q}}(x_0)\geq 0$ for any $\mathfrak{q}\nmid \mathfrak{p}$. So the discriminant $\delta_{L/H}$ of $L/H$ is $\prod_{i=1}^h (\mathfrak{p}_i)^{(p-1)(m+1)}.$ In the tower of extensions $M/L/H$,
$$\prod_{i=1}^h (\mathfrak{p}_i)^{p^{h-1}\cdot (p-1)(m+1)}=\delta_{M/H}=\delta_{L/H}^{p^{h-1}}\cdot N_{L/H}(\delta_{M/L})=(\prod_{i=1}^h (\mathfrak{p}_i)^{(p-1)(m+1)})^{p^{h-1}}\cdot N_{L/H}(\delta_{M/L}).$$
Comparing these two equations, we get $\delta_{M/L}=1$, i.e. $M/L$ is unramified.

\end{proof}
\begin{prop}\label{pnotdividclassnumber}
Under condition (II) in Theorem \ref{maintheorem}, the function field $L$ admits a $(p,S_L)$-infinite class tower, where $S_L$ is the set of infinite places in $L$.
\end{prop}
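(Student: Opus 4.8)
The plan is to imitate the proof of \pref{pdividclassnumber}: show that the Galois group $G=\gal{\Omega_L}{L}$ of the maximal unramified $p$-extension $\Omega_L$ of $L$ in which all infinite places split completely is infinite, producing enough generators of $G$ from \lref{unramified} and then playing the \v{S}afarevi\v{c}-Golod theorem against the bound on relations from Theorem~2.3 of \cite{Sc}. Write $h_i=d_p(H^i(G,\Z/p\Z))$, so $h_1=d_p(\tilde{Cl}_L)$.

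First I would count generators. The extensions $H_1,\dots,H_h$ are linearly disjoint over $H$: each $H_i/H$ ramifies only at $\mathfrak p_i$, which is unramified in the compositum of the remaining $H_j$, so $[H(y_1,\dots,y_h):H]=p^h$. Since $y_0=y_1+\dots+y_h$ lies in $M=H(y_1,\dots,y_h)$, we get $\gal{M}{L}\cong(\Z/p\Z)^{h-1}$. By \lref{unramified} the extension $M/L$ is unramified, and since $M/H$ is the compositum of the $H_i/H$, each of which splits completely at every infinite place of $H$, the extension $M/L$ splits completely at every place of $S_L$. Hence $M\subseteq\Omega_L$ and $h_1\ge h-1$.

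Now I would run the \v{S}afarevi\v{c}-Golod argument. Suppose $\Omega_L$ were finite, so $G$ is a non-trivial finite $p$-group. Then $h_2>\frac{h_1^2}{4}$, while the argument in Theorem~2.3 of \cite{Sc} gives $h_2-h_1\le|S_L|-1$, whence $\frac{h_1^2}{4}<h_1+|S_L|-1$. It remains to compute $|S_L|-1=d_p(\mathcal O_L^*)$ in terms of $h,p,q,\varphi(\mathfrak p^m)$ — which amounts to tracking the $\varphi(\mathfrak p^m)/(q-1)$ infinite places of $K(\lambda_{\mathfrak p^m})$ through $H$ and through the Artin-Schreier extension $L/H$ — and to verify that the resulting inequality, together with $h_1\ge h-1$ and the fact that $x\mapsto x^2-4x$ is increasing on $[2,\infty)$, is incompatible with condition (II), $h\ge\frac{4p\varphi(\mathfrak p^m)}{q-1}+4$. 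This contradiction shows $\Omega_L$ is infinite, i.e.\ $L$ admits an infinite $(p,S_L)$-class field tower.

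The main obstacle is precisely this last numerical step. Condition (II) is calibrated so that the $h-1$ generators furnished by $M$ just suffice to beat $\frac14 h_1^2$ against the relation bound, so one must be careful about the exact number of infinite places of $L$ — that is, about the splitting type in $L/H$ of the primes of $H$ away from $\mathfrak p$ — and, if the count is unfavourable, squeeze out a slightly sharper lower bound for $h_1$ (or upper bound for the relation module $H^2(G,\Z/p\Z)$). A subsidiary point is to confirm that ``splits completely at $S_L$'' is preserved on passing from the $H_i/H$ to $M/L$ and on up $\Omega_L$, so that $M$ genuinely contributes generators to $G$ rather than to the larger Galois group obtained by dropping the splitting condition. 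Apart from that, the argument is the template of \pref{pdividclassnumber}.
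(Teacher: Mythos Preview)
Your outline is precisely the paper's argument. The two loose ends are resolved as follows. Since every infinite place of $H$ splits in each $H_i$, each $x_i$ is locally of the form $z^p-z$ at every infinite place, hence so is $x_0=\sum_i x_i$; thus $L/H$ and $M/L$ split completely at infinity, $M\subseteq\Omega_L$, and $|S_L|=p\,|S_H|=\dfrac{p\,h\,\varphi(\mathfrak p^m)}{q-1}$. For the generator count the paper asserts $h_1\ge h$, claiming that the $h$ extensions $L(y_i)/L$ are linearly disjoint; with that bound one gets
\[
\frac{h^2}{4}-h\le\frac{h_1^2}{4}-h_1<h_2-h_1<\frac{p\,h\,\varphi(\mathfrak p^m)}{q-1},
\]
hence $h<\dfrac{4p\,\varphi(\mathfrak p^m)}{q-1}+4$, contradicting condition~(II).

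Your caution about the generator count is, however, warranted. Since $y_0=y_1+\cdots+y_h\in M$, the compositum of the $L(y_i)$ is $M$ with $[M:L]=p^{h-1}$, so the $L(y_i)$ are \emph{not} linearly disjoint over $L$ and $M$ alone supplies only $h_1\ge h-1$, exactly as you wrote. With that weaker bound the chain of inequalities, combined with $h\ge\dfrac{4p\,\varphi(\mathfrak p^m)}{q-1}+4$, reduces to $(h-1)(h-5)<h(h-4)$, i.e.\ $2h>5$, which is no contradiction. So the paper's stated justification for $h_1\ge h$ does not stand, and the ``slightly sharper lower bound for $h_1$'' you anticipate would genuinely be required (or condition~(II) tightened by one unit) to close the argument. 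Apart from this single point your proposal matches the paper line for line.
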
 
\begin{proof}
Let $\Omega_L$ be the maximal unramified pro-$p$ extension of $L$ where all places in $S_L$ split completely, with Galois group $G=\gal{\Omega_L}{L}$. We claim $\Omega_L/L$ is infinite. Suppose otherwise $\Omega_L/L$ is a finite extension. Then $G$ is a finite $p$-group and $\Omega_L$ admits no cyclic unramified extensions of degree $p$, in which all infinite places split completely. Let $h_i=d_p(H^i(G,\Z/p\Z))$ for all $i>0$. By the proof of Theorem $2.3$ in \cite{Sc} we have
\begin{equation}
h_2-h_1\leq |S_L|-1<\frac{p\cdot\varphi(\mathfrak{p}^m)h}{q-1}.
\end{equation}
By Lemma \ref{unramified}, we have $h$ linearly disjoint unramified $p$-extensions $L(y_i)$ over $L$, so $h_1\geq h$. By the \v{S}afarevi\v{c}-Golod theorem for function fields (Theorem 2.1 in \cite{Sc}), we have $h_2>\frac{h_1^2}{4}$. Condition (II) says $h$ is a prime with $h(\geq \frac{4p\cdot\varphi(\mathfrak{p}^m)}{q-1}+4)>4$. Combining all the inequalities and the fact that the function $x^2-4x$ monotonically increases with $x$ if $x\geq 2$, we have 
$$\frac{h^2}{4}-h\leq \frac{h_1^2}{4}-h_1<h_2-h_1<\frac{p\cdot\varphi(\mathfrak{p}^{m})h}{q-1},$$ 
which implies $h<\frac{4p\cdot\varphi(\mathfrak{p}^m)}{q-1}+4$, contradiction.
\end{proof}
Combining Proposition \ref{pnotdividclassnumber} and Proposition \ref{pdividclassnumber}, we can conclude Theorem \ref{maintheorem} in the introduction.

\begin{rem}
Under condition (I), we take $K_0$ to be the cyclic unramified extension $H$ over $K(\lambda_{\mathfrak{p}^m})$. Under condition (II), we take $K_0$ to be the degree $p$ Artin-Schreier extension $L$ of $H$. Notice $p\nmid |\tilde{Cl}_{K(\lambda_{\mathfrak{p}^m})}|$, i.e.\ $d_p(\tilde{Cl}_{K(\lambda_{\mathfrak{p}^m})})=0$ in condition (I) and $p\nmid |\tilde{Cl}_H|$, i.e.\ $d_p(\tilde{Cl}_{H})=0$ in condition (II), so Schoof's argument cannot be applied to $K(\lambda_{\mathfrak{p}^m})$ and $H$ respectively.

\end{rem}

\bigskip
\section{Examples for small values of $\deg(\mathfrak{p})$}
As a consequence of Theorem \ref{maintheorem}, we will verify in the case of a small $m$ that an extension $K_0$ of the cyclotomic function field $K(\lambda_{\mathfrak{p}^m})$ has an infinite $(p,S_{K_0})$-class tower for a certain regular prime $\mathfrak{p}\in A$. Denote by $K(\lambda_{\mathfrak{p}^m})^+$ the maximal subfield of $K(\lambda_{\mathfrak{p}^m})$ in which $1/t$ splits completely. Denote by $h^+$ and $\tilde{h}^+$ the ideal class number $|Cl_{K(\lambda_{\mathfrak{p}^m})^+}|$ and $|\tilde{Cl}_{K(\lambda_{\mathfrak{p}^m})^+}|$, and define the relative class numbers $$h^-:=\frac{|Cl_{K(\lambda_{\mathfrak{p}^m})}|}{h^+};\hspace{3em} \tilde{h}^-:=\frac{|\tilde{Cl}_{K(\lambda_{\mathfrak{p}^m})}|}{\tilde{h}^+}.$$
Rosen showed in \cite{Ro} that $h^-=(q-1)^{|S_{K(\lambda_{\mathfrak{p}^m})}|-1}\cdot \tilde{h}^-$, where $S_{K(\lambda_{\mathfrak{p}^m})}$ is the set of infinite places in $K(\lambda_{\mathfrak{p}^m})$. So for a prime $h$ such that $\gcd(h, q-1)=1$, $h^-$ and $\tilde{h}^-$ has the same divisibility. 

We first consider the case $m=1$, i.e.\ the prime cyclotomic function field case. The values of $h$ in the following table are factors of the relative class numbers $h^-$ of cyclotomic function fields $K(\lambda_{\mathfrak{p}^m})$, which are obtained through the class number table in \cite{IS}. Also all computations of the order $f_{(p,h)}$ of $p$ in $(\Z/h\Z)^*$ are done by \cite{PARI2}.
{\setlength{\tabcolsep}{0.1cm}
\renewcommand{\arraystretch}{1.6}
\begin{table}[!h]
\begin{center}
\begin{tabular}{|c|c|c|c|}
\hline
$\mathfrak{p}$  & $h$  & $f_{p,h}$  \\
\hline \hline
$2+2t+t^4$  & $63648628175761$ & $31824314087880$  \\
\hline
$2+t+t^4$ & $532611841$ &  $88768640$  \\
\hline
\end{tabular}
\end{center}
\end{table}}

\begin{cor}\label{q=3}
The cyclotomic function field in Theorem \ref{maintheorem} can be taken as $\mathbb{F}_q(t)(\lambda_{\mathfrak{p}})$ where $q=3$ and the prime $\mathfrak{p}=2+2t+t^4$ or $2+t+t^4$. 
\end{cor}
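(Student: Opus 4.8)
The plan is to verify that the hypotheses of Theorem \ref{maintheorem} hold for $q=3$ and either $\mathfrak{p}=2+2t+t^4$ or $\mathfrak{p}=2+t+t^4$, taking $m=1$ throughout. Since $\deg(\mathfrak{p})=4$ and $q=3$, we have $\varphi(\mathfrak{p})=q^{4}-q^{3}=81-27=54$, so $\frac{\varphi(\mathfrak{p})}{q-1}=27$. The strategy is to place ourselves in case (I): we must produce a cyclic unramified Galois extension $H/K(\lambda_{\mathfrak{p}})$ of prime degree $h\neq p=3$ in which all infinite places split completely, with $p\mid|\tilde{Cl}_H|$, $p\nmid|\tilde{Cl}_{K(\lambda_{\mathfrak{p}})}|$, and the numerical inequality $f_{p,h}^2-4f_{p,h}\geq \frac{4h\cdot\varphi(\mathfrak{p})}{q-1}=108\,h$.

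\textbf{Step 1: choice of $h$ and existence of $H$.} By the class number table in \cite{IS}, the relative class number $h^-$ of $K(\lambda_{\mathfrak{p}})$ has the listed value $h$ as a prime factor (respectively $63648628175761$ and $532611841$); one checks these are primes coprime to $q-1=2$ and coprime to $p=3$. By Rosen's formula $h^-=(q-1)^{|S_{K(\lambda_{\mathfrak{p}})}|-1}\tilde{h}^-$ and $\gcd(h,q-1)=1$, so $h\mid\tilde{h}^-\mid|\tilde{Cl}_{K(\lambda_{\mathfrak{p}})}|$. Hence there is a subfield $H$ of the Hilbert class field of $K(\lambda_{\mathfrak{p}})$ with $\gal{H}{K(\lambda_{\mathfrak{p}})}$ cyclic of order $h$; since it sits inside the Hilbert class field, $H/K(\lambda_{\mathfrak{p}})$ is unramified and all infinite places split completely (they split completely in the full Hilbert class field by definition in the function-field setting used here).

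\textbf{Step 2: the $p$-divisibility conditions.} We need $p\nmid|\tilde{Cl}_{K(\lambda_{\mathfrak{p}})}|$, i.e.\ $\mathfrak{p}$ is regular; and $p\mid|\tilde{Cl}_H|$. Regularity of $\mathfrak{p}$ is read off from the class number data in \cite{IS} (the full class number $|Cl_{K(\lambda_{\mathfrak{p}})}|=h^+\cdot h^-$ is prime to $3$ for these two $\mathfrak{p}$, equivalently $\tilde h^+\tilde h^-$ is prime to $3$ by Rosen's formula since $q-1=2$). For $p\mid|\tilde{Cl}_H|$ one argues as in Proposition \ref{pdividclassnumber}'s use of the semidirect-product structure, or more directly: it suffices to invoke Proposition \ref{rayclass} and the Artin--Schreier construction — but in fact case (I) only requires $p\mid|\tilde{Cl}_H|$, which is forced by the same Golod--Šafarevič mechanism once $h$ is this large. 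The cleanest route is: if $p\nmid|\tilde{Cl}_H|$ we would be in case (II), but $h$ here vastly exceeds $\frac{4p\varphi(\mathfrak{p})}{q-1}+4=4\cdot 3\cdot 27+4=328$, so Proposition \ref{pnotdividclassnumber} already gives the conclusion; thus \emph{in either case} $H$ (or the associated $L$) has an infinite tower, and we simply note that the relevant inequality of the appropriate condition holds.

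\textbf{Step 3: the numerical inequality.} With $f_{p,h}$ as tabulated (computed in \cite{PARI2}), we check $f_{p,h}^2-4f_{p,h}\geq 108\,h$. For $\mathfrak{p}=2+2t+t^4$: $f_{p,h}=31824314087880=h/2$ (indeed $2f_{p,h}=h-1$), so $f_{p,h}^2-4f_{p,h}\approx h^2/4$, which dwarfs $108\,h$ since $h\gg 432$. For $\mathfrak{p}=2+t+t^4$: $f_{p,h}=88768640$, and one checks $f_{p,h}^2=7.87\times10^{15}$ while $108\,h\approx 5.75\times10^{10}$, so the inequality holds with enormous room. The main obstacle is not any of these verifications individually — each is a finite computation — but rather confirming that the tabulated values of $h$ and $f_{p,h}$ are correct and that the class-number data of \cite{IS} indeed yields $p\nmid|Cl_{K(\lambda_{\mathfrak{p}})}|$ (regularity of $\mathfrak{p}$) together with a prime factor $h$ of $h^-$ of the stated size; granting the tables, the corollary follows immediately by applying Theorem \ref{maintheorem} with $K_0=H$ (case (I)).
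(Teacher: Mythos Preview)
Your approach is the same as the paper's: pick a large prime factor $h$ of $\tilde h^-$, take $H$ the corresponding degree-$h$ subfield of the Hilbert class field, and then argue by dichotomy --- if $p\mid|\tilde{Cl}_H|$ check condition (I), if $p\nmid|\tilde{Cl}_H|$ check condition (II). Both numerical inequalities hold with enormous margin, so the corollary follows. That is exactly what the paper does.

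Two corrections, though. First, your value of $\varphi(\mathfrak{p})$ is wrong: for $m=1$ the formula gives $\varphi(\mathfrak{p})=q^{d}-q^{0}=3^4-1=80$, not $q^4-q^3=54$; hence the bound in (II) is $\tfrac{4\cdot3\cdot80}{2}+4=484$ (not $328$) and the bound in (I) is $160h$ (not $108h$). The chosen $h$ still clears both easily, so the conclusion survives, but the arithmetic should be fixed. Second, your Step~2 is logically tangled: you announce you will ``place ourselves in case (I)'' and then assert that $p\mid|\tilde{Cl}_H|$ is ``forced by the same Golod--\v{S}afarevi\v{c} mechanism once $h$ is this large'' --- that claim is unfounded and should be deleted. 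You do not know which of the two cases occurs for $H$; the correct argument (which you do state a few lines later) is simply to verify \emph{both} inequalities and observe that whichever case obtains, Theorem~\ref{maintheorem} applies. Correspondingly, your last sentence should not say ``$K_0=H$ (case (I))'': in case (II) one takes $K_0=L$, the Artin--Schreier extension of $H$, as in the paper's Remark following Proposition~\ref{pnotdividclassnumber}.
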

\begin{proof}
In the case $\mathfrak{p}=2+2t+t^4$, the relative class number of $K(\lambda_{2+2t+t^4})$ is $2^{39}\cdot 17\cdot 97\cdot 63648628175761$. Let $H$ be a subfield of the Hilbert class field of $K(\lambda_{2+2t+t^4})$ with $\gal{H}{K}\cong \Z/63648628175761\Z$. When $3\mid |Cl_H|$, condition (I) in Theorem \ref{maintheorem} is satisfied since $2+2t+t^4$ is regular by the table in \cite{IS} and $f_{3,63648628175761}=31824314087880$ thus $f_{3,63648628175761}^2-4f_{3,63648628175761}\geq \frac{4\cdot 63648628175761 \cdot (3^4-1)}{2}$; when $3\nmid |Cl_H|$, condition (II) in Theorem \ref{maintheorem} is satisfied since $h=63648628175761$ and thus $h\geq \frac{4\cdot 3\cdot (3^4-1)}{3-1}=484$. 

In the case $\mathfrak{p}=2+t+t^4$, the relative class number of $K(\lambda_{2+t+t^4})$ is $2^{39}\cdot 241\cdot 641\cdot 881\cdot 532611841$. Let $H$ be a subfield of the Hilbert class field of $K(\lambda_{2+t+t^4})$ with $\gal{H}{K}\cong \Z/532611841\Z$. When $3\mid |Cl_H|$, condition (I) in Theorem \ref{maintheorem} is satisfied since $2+t+t^4$ is regular by the table in \cite{IS} and $f_{3,532611841}=88768640$ thus $f_{3,532611841}^2-4f_{3,532611841}\geq \frac{4\cdot 532611841 \cdot (3^4-1)}{2}$; when $3\nmid |Cl_H|$, condition (II) in Theorem \ref{maintheorem} is satisfied since $h=532611841\geq \frac{4\cdot 3\cdot (3^4-1)}{3-1}+4=484$. 

\end{proof}

Next we apply Theorem \ref{maintheorem} to the case $m>1$ and the degree $d=\deg(\mathfrak{p})=1$. Since $d=1$, we may assume $\mathfrak{p}=t$. The following values of $h$ are factors of the relative class numbers $h^-$ of cyclotomic function fields $K(\lambda_{\mathfrak{p}^m})$, which are from \cite{GS}.
{\setlength{\tabcolsep}{0.1cm}
\renewcommand{\arraystretch}{1.6}
\begin{table}[!h]
\begin{center}
\begin{tabular}{|c|c|c|c|c|c|}
\hline
$m$ & $q$  & $h$  & $f_{p,h}$ & $(f_{p,h}^2-4f_{p,h})-(\frac{4h\cdot\varphi(\mathfrak{p}^m)}{q-1})$ & $h-(\frac{4p\cdot\varphi(\mathfrak{p}^m)}{q-1}+4)$ \\
\hline \hline
$2$ & $7$ & $118147$  & $39382$ & $1547476280$ & $11647$  \\
\hline
$2$ & $11$ & $19031$ & $19030$ & $361227416$ & $18543$ \\
\hline
$3$ & $5$ & $821$ & $410$ & $84360$ & $317$\\
\hline
$4$ & $3$ & $379$ & $378$ & $100440$ & $51$ \\
\hline
$5$ & $3$ & $5779$ & $5778$ & $31489776$ & $4803$ \\
\hline  
\end{tabular}
\end{center}
\end{table}} 
\begin{cor}\label{d=1}
The cyclotomic function field in Theorem \ref{maintheorem} can be taken as $\mathbb{F}_7(t)(\lambda_{t^2})$, $\mathbb{F}_{11}(t)(\lambda_{t^2})$, $\mathbb{F}_5(t)(\lambda_{t^3})$, $\mathbb{F}_3(t)(\lambda_{t^4})$ and $\mathbb{F}_3(t)(\lambda_{t^5})$. 
\end{cor}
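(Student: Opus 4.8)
The plan is to apply Theorem \ref{maintheorem} directly, using the numerical data assembled in the preceding table. For each listed triple $(m,q,h)$ with $\mathfrak{p}=t$, I would proceed exactly as in the proof of Corollary \ref{q=3}: the value $h$ is a prime factor of the relative class number $h^-$ of the cyclotomic function field $K(\lambda_{t^m})=\F_q(t)(\lambda_{t^m})$, obtained from the tables in \cite{GS}. Since $h$ is prime and, in each case, $\gcd(h,q-1)=1$, Rosen's formula $h^-=(q-1)^{|S_{K(\lambda_{t^m})}|-1}\cdot\tilde h^-$ (proved in \cite{Ro}) shows $h$ also divides $\tilde h^-$, hence divides $|\tilde{Cl}_{K(\lambda_{t^m})}|$. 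Therefore one can choose a cyclic unramified subextension $H$ of the Hilbert class field of $K(\lambda_{t^m})$ with $\gal{H}{K(\lambda_{t^m})}\cong\Z/h\Z$, in which all infinite places split completely; moreover $H/K(\lambda_{t^m})$ has prime degree $h\neq p=\operatorname{char}\F_q$, since $h$ is coprime to $q-1$ and in particular to $p$.

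The next step is the case split on whether $p\mid|\tilde{Cl}_H|$. If $p\mid|\tilde{Cl}_H|$, I would verify condition (I) of Theorem \ref{maintheorem}: one needs $p\nmid|\tilde{Cl}_{K(\lambda_{t^m})}|$, which holds because $t$ is a regular prime (so $p\nmid|Cl_{K(\lambda_t)}|$, and one should note that regularity of $\mathfrak{p}$ together with the standard comparison of class numbers in the tower $K(\lambda_{\mathfrak{p}^m})/K(\lambda_{\mathfrak{p}})$ gives $p\nmid|\tilde{Cl}_{K(\lambda_{t^m})}|$), and one needs the inequality $f_{p,h}^2-4f_{p,h}\geq\frac{4h\cdot\varphi(t^m)}{q-1}$, where $\varphi(t^m)=q^m-q^{m-1}$ since $d=\deg(t)=1$. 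This inequality is exactly the statement that the fifth column of the table is nonnegative, which can be read off entry by entry. If instead $p\nmid|\tilde{Cl}_H|$, I would verify condition (II): $h\geq\frac{4p\cdot\varphi(t^m)}{q-1}+4$, which is precisely the assertion that the sixth column is nonnegative, again read off from the table. In either case Theorem \ref{maintheorem} produces a field $K_0$ ramified over $\F_q(t)$ only at $t$ and $\infty$ admitting an infinite Hilbert $(p,S_{K_0})$-class field tower; since $\mathfrak{p}=t$ is regular, this is the desired example. Running through all five rows of the table completes the proof.

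The only genuinely nontrivial point — and the one I would want to state carefully rather than skip — is the passage from regularity of $\mathfrak{p}=t$ (defined via $p\nmid|Cl_{K(\lambda_{\mathfrak{p}})}|$) to $p\nmid|\tilde{Cl}_{K(\lambda_{\mathfrak{p}^m})}|$, which is what conditions (I) and (II) actually require; this uses that in the $\Z/\varphi(\mathfrak{p}^m)\Z$-extension $K(\lambda_{\mathfrak{p}^m})/K(\lambda_\mathfrak{p})$ the relevant part of the class group is controlled, together with Rosen's relation to pass between $Cl$ and $\tilde{Cl}$. Everything else is arithmetic bookkeeping: checking $\gcd(h,q-1)=1$ for each listed $h$, confirming the table values of $f_{p,h}$ (computed in \cite{PARI2}), and confirming the two difference columns are nonnegative. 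No new ideas beyond those in the proof of Corollary \ref{q=3} are needed; the corollary is simply the $d=1$, $m>1$ instantiation of the same mechanism.
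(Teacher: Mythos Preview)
Your proposal is correct and follows the same strategy as the paper: read off a prime factor $h$ of the relative class number from the tables in \cite{GS}, take the corresponding cyclic unramified $H$, and verify both numerical inequalities so that Theorem \ref{maintheorem} applies regardless of which case the pair $(H,p)$ falls into.

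The one place you diverge from the paper is in your handling of the ``nontrivial point'' you flag. You propose to deduce $p\nmid|\tilde{Cl}_{K(\lambda_{t^m})}|$ from regularity of $t$ (i.e.\ from $p\nmid|Cl_{K(\lambda_t)}|$) via a comparison of class numbers in the tower $K(\lambda_{t^m})/K(\lambda_t)$. The paper does not do this and does not need to: it reads both $h^+$ and $h^-$ for $K(\lambda_{t^m})$ directly from the tables in \cite{GS} at the given level $m$, so the non-divisibility $p\nmid|\tilde{Cl}_{K(\lambda_{t^m})}|$ is a direct table lookup rather than a consequence of regularity at level $1$. (The paper's phrase ``$t$ is a regular prime, thus $p\nmid|\tilde{Cl}_{\F_q(t)(\lambda_{t^m})}|$'' is a mild abuse; what is actually being read from \cite{GS} is the full class number data at level $m$.) Your proposed tower argument would work if one has the function-field analogue of the classical result that a regular prime does not divide the class number at any higher cyclotomic level, but invoking it is unnecessary here and adds a dependency the paper avoids.
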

\begin{proof}
From the table of the class numbers $h^+$ and $h^-$ of $\mathbb{F}_q(t)(\lambda_{t^m})$ in \cite{GS}, we know in all above cases $t$ is a regular prime, thus $p\nmid |\tilde{Cl}_{\mathbb{F}_q(t)(\lambda_{t^m})}|$. To apply Theorem \ref{maintheorem}, we just need to verify the two inequalities $f_{p,h}^2-4f_{p,h}\geq\frac{4h\cdot\varphi(\mathfrak{p}^m)}{q-1}$ and $h\geq\frac{4p\cdot\varphi(\mathfrak{p}^m)}{q-1}+4$, which are shown to be true in the table above.
\end{proof}
Corollaries \ref{q=3} and \ref{d=1} complete the proof of Theorem \ref{infiniteclassfield2}. 
\bigskip
\bigskip
\providecommand{\bysame}{\leavevmode\hbox
 to3em{\hrulefill}\thinspace}

 \end{document}